\newtheorem{theorem}{Theorem}
\newtheorem{lemma}{Lemma}
\newtheorem{prop}{Proposition}
\def\pP{{\mathbb P}}
\def\eE{{\mathbb E}}
\def\rR{{\mathbb R}}
\title{On Learning the Distribution of a Random Spatial Field in a
	Location-Unaware Mobile Sensing Setup
}
\author{
	Meera Pai \\
	Department of Electrical Engineering \\
	IIT Bombay,
	India\\
	\texttt{meeravpai@ee.iitb.ac.in} \\
}
\begin{document}
	\maketitle

	\begin{abstract}
	 In applications like environment monitoring and pollution control, physical quantities are modeled by spatio-temporal fields. It is of interest to learn the statistical distribution of such fields as a function of space, time or both. In this work, our aim is to learn the statistical distribution of a spatio-temporal field along a fixed one dimensional path, as a function of spatial location, in the absence of location information. Spatial field analysis, commonly done using static sensor networks is a well studied problem in literature. 
	 Recently, due to flexibility in setting the spatial sampling density and low hardware cost, owing to larger spatial coverage, mobile sensors are used for this purpose. The main challenge in using mobile sensors is their location uncertainty. Obtaining location information of samples requires additional hardware and cost. So, we consider the case when the spatio-temporal field along the fixed length path is sampled using a simple mobile sensing device that records field values while traversing the path without any location information. We ask whether it is possible to learn the statistical distribution of the field, as a function of spatial location, using samples from the location-unaware mobile sensor under some simple assumptions on the field. We answer this question in affirmative and provide a series of analytical and experimental results to support our claim. 
	\end{abstract}

		\keywords{Random spatial field \and distribution learning \and location-unaware mobile sensing \and Lipschitz continous spatial field \and bounded spatial field.}
	
\section{Introduction}
Learning the statistical distribution of spatial (physical) fields is a
fundamental task in applications such as environment monitoring and pollution
management. Classical solution to this problem includes the usage of a fixed
array of sensors, deployed at prime locations over the region of
interest (done by agencies like EPA - \url{https://www.epa.gov/}). In the past decade, a thorough analysis and some testbeds have
proposed the usage of mobile sensors (see ~\cite{Unnikrishnan2013}). A key advantage of a mobile sensor
is that its infrastructure cost is lower and facilitates higher density of
samples. A major challenge in using mobile sensors is the uncertainty of sample
locations.  With a global positioning system (GPS) receiver, the location can be
estimated.  However, this information requires extra cost in terms of hardware
and battery. The knowledge of location can also have privacy
implications on the mobile sensor or its experiments (see ~\cite{Lin_Ye}). That is,
localization of sensors in MSNs can cost extra energy and
hardware (see ~\cite{Che_09,hu2004localization, Lin_Tao, Nguyen}). 

In this work,  a spatial field $X(s,t)$ is considered for analysis where  $s \in
{\cal P}$ is the location on a finite-length path ${\cal P}$ and  $t\in\rR$ is
time. It is assumed that the field is bounded in $[-b,b]$ for a finite $b > 0$
and is Lipschitz continuous; that is $|X(s,t) - X(s',t)| \leq \alpha |s -s'|$ for a
finite and fixed $\alpha > 0$.  Learning the statistical distribution of the
field $X(s,t)$ as a function of $s$ along the path $ {\cal P}$ using samples
from a location-unaware mobile sensor is of interest.  The sampling locations
are unknown and modeled by an unknown renewal process as done in a
location-unaware mobile sensing setup in ~\cite{kumar17}.  Let sampling locations
be $S_1, S_2, \ldots, S_M$ along the path ${\cal P}$, where $M$ is the random
number of samples obtained during a mobile-sensing experiment. 
%
%
The inter-sample intervals $\theta_1 := S_1$, $\theta_2 := S_2 - S_1, \ldots$
are independent and identically distributed in our model.  By unknown
renewal process, it is implied that the distribution of $\theta_1$ is not known.

The proposed distribution-learning of the spatial field is designed around $N$
experiments of the mobile sensing experiment along path $ {\cal P} $.  It is
assumed that the obtained samples of the spatial-field, as well as the unknown
randomly distributed sampling locations,  are statistically independent between
different experiments. The samples from the same experiment may be dependent or
independent, but this assumption is not used in our subsequent analysis.  Using
the location-unaware samples from $N$ independent experiments, the statistical
distribution of $X(s, t)$ for any point $s\in {\cal P}$ has to be learned.
The main results of this work are as follows:
\begin{enumerate}
	\item We design a cumulative distribution function (CDF) estimand for $X(s, t),
	\quad s \in {\cal P}$ with the maximum error between the estimand and the
	underlying CDF of $X(s, t),  s \in {\cal P}$ is of the order
	$\mathcal{O}\left(\frac{1}{\sqrt n}\right)$.  This result holds when the number of
	experiments $N \rightarrow \infty$.
	\item When the number of experimental trials $N$ is finite, we design a CDF estimand for $X(s, t)$, $s \in {\cal P}$ such that the maximum error between the estimand and the
	underlying CDF of $X(s, t)$, $s \in {\cal P}$ is of the order
	$\mathcal{O}\left(\frac{1}{\sqrt n} + \frac{1}{\sqrt N}\right)$ with probability at least $1-\delta$ for any $\delta > 0$. 
	\item Using custom experiments with a handheld sound-meter and acoustic noise
	measurements along a bounded length path, a data set was created to validate the
	distribution learning method.  The experimental results compare
	distribution-learning with a fixed and a mobile sound-level probe.
\end{enumerate}
\begin{figure*}[!t]
	\centering
	\subfloat[]{\includegraphics[width=2in]{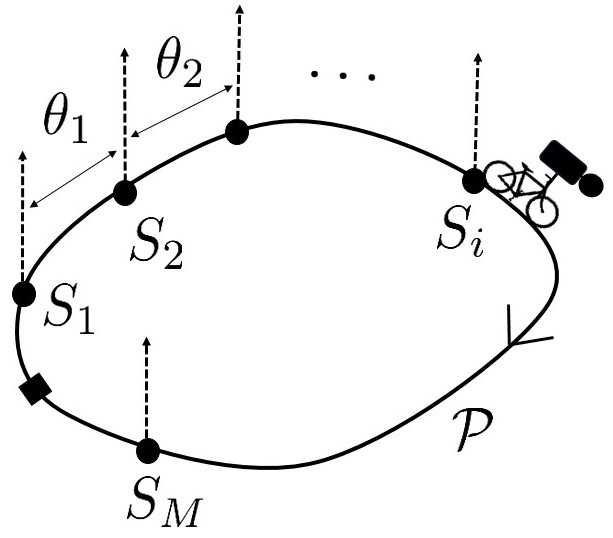}%
		\label{fig:intro_diag}}
	\hfil
	\subfloat[]{\includegraphics[width=2in]{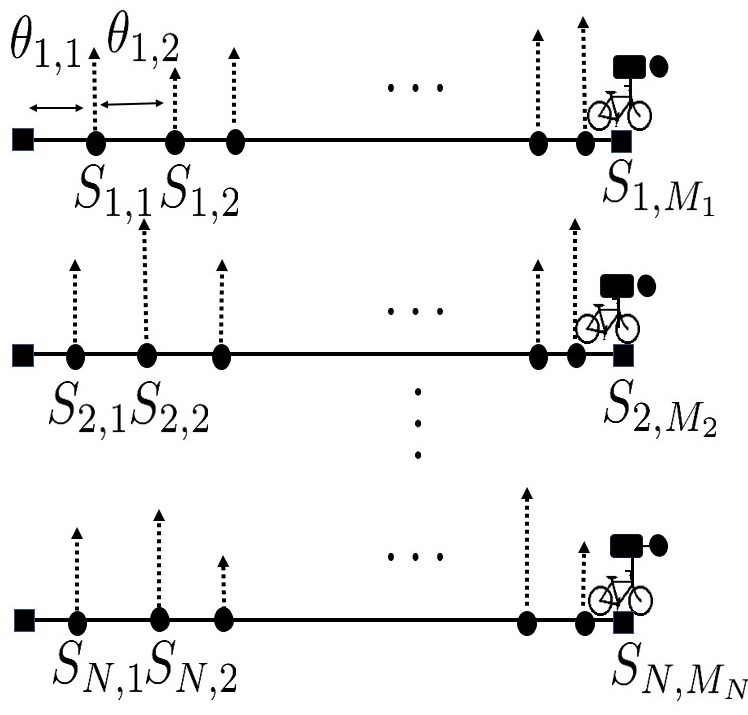}%
		\label{fig:sampling_model}}
	\caption{(a) Mobile
		sensor moving along the fixed length 1-D path ${\cal P }$. The
		field samples are obtained at unknown locations $S_1$, $S_2, \ldots, S_M$. (b) N
		trials of mobile sensing experiment are carried on N different days. Spatial field values are recorded at
		unknown locations $S_{i,1}, S_{i,2}, \ldots, S_{i,M_{i}}$ on trial $i$. The
		number of samples recorded during the $i$th trial is denoted by $M_i$.}
	\label{fig_sim}
\end{figure*}

\textit{Prior art}: Sampling and reconstruction of spatial fields is an extension of the classical
Shannon's sampling theorem addressed in \cite{A.J.Jerri1977,Marvasti2012}.
Sensing of spatial fields with static/fixed sensor networks is a broadly studied topic 
studied in \cite{Marco2003,Nowak2004,Kumar2011,Willet2004}. A systematic analysis 
of spatial field sampling using mobile sensors is reported in \cite{Unnikrishnan2013}. 
\\\\Location-unaware sensing and analysis of spatial fields in the case of static sensors
as well as mobile sensors is addressed in literature. Sampling of a bandlimited spatial
field at unknown locations is reported in \cite{Marziliano2000,Nordio2008,Elhami2018}.
Sampling of spatially
bandlimited fields at unknown but statistically distributed sampling locations
is addressed in~\cite{Kumar2015}, and~\cite{Mallick2016} studies the optimal
probability distribution of sensor locations in this case.  It is proved
in~\cite{Pacholska2017} that reconstruction of polynomial functions can be
achieved if unknown sampling locations can be described by an unknown rational
function.  Bandlimited field reconstruction using location-unaware mobile sensing
is addressed in \cite{Kumar2016, kumar17}. Spatio-temporal field sampling
using location-unaware mobile sensors, where a one dimensional spatial field evolving
with time according to a partial differential equation is studied in \cite{salgia2018}.

In addition to the sampling and reconstruction of spatial fields using a dedicated sensor 
network, as mentioned above, a study on crowd sensing approach to spatial field sensing 
is available in literature. Environmental monitoring with smart-phone applications is 
studied in \cite{Picaut2019} and vehicular sensor networks is studied in \cite{Wang2017},
\cite{Morselli2018}. 

When the spatial field is random, instead of reconstructing the field, it is of interest 
to learn the statistical distribution of the spatial field as a function of spatial location.
Consider the spatial field $X(s, t)$ varying with time and space. At a given location $s_0$, the
field value $X(s_0, t)$ is random. It is of interest to learn the distribution of this random
variable. The problem of learning the distribution of a random variable from its samples is 
a well studied problem in literature (see \cite{Kamath2015, orlitsky2015, SURF}).
However, in our problem, since the sensor is location-unaware, the field is not sampled 
exactly at the desired location. In such a case, the following sections provide a method
to learn the distribution of the spatial field under some simple assumptions.

\section{Spatial Field Properties}
Let $ X( s , t ) $ be the spatial field of interest, where $ s $ denotes a
spatial location and $ t $ denotes time.  Time $ t \in \rR $ and location $ s
\in \cal P $, where $\cal P $ denotes a bounded one-dimensional path. Path $
\cal P $ can be parameterized by a scalar and it can be normalized to $ { \cal P
} = [ 0, 1 ]$. The field is assumed to be bounded in $ [ -b, b ]  $  for
a finite $ b > 0 $ and Lipschitz continuous w.r.t. $ s $;
that is 
\begin{align*} 
	|X(s_1, t) - X(s_2, t)| \leq \alpha |s_1 - s_2|,
\end{align*} 
where $\alpha > 0$ is the Lipschitz constant. Lipschitz continuity indicates that nearby spatial locations
have similar field values. The field $ X( s , t ) $ may not be stationary.

\section{Location-Unaware Mobile Sensing Model}

The field $ X( s, t ) $ is sampled using a location-unaware mobile sensor. The
sensor moves along the path $ \cal P $ and samples the field at regular intervals in
time. As the sensor is location-unaware and moves at non-uniform speed, samples
are obtained at unknown locations $ S_1, S_2, \ldots, S_M $ along the path
${\cal P}$. The inter-sample
intervals are $\theta_1 := S_1$, $\theta_2  := S_2 - S_1, \ldots$, $\theta_M :=
S_M  - S_{M-1}$. Depending on the time taken to traverse the path $\cal P$, a
random number of samples is recorded during a mobile sensing trial. The random
number of samples denoted by $M$ is given by a stopping rule (see ~\cite{durrett}):
\begin{align*}
	\theta_{1} + \theta_{2} + \ldots + \theta_{M} \leq 1 \mbox{ and  } \theta_{1} +
	\theta_{2} + \ldots + \theta_{M+1} > 1.
\end{align*}

The sampling locations are modeled as arrivals of an unknown renewal process.
Therefore, $ \theta_1, \theta_2, \ldots $ are independent and identically
distributed. It is assumed that 
\begin{align*}
	0<\theta\leq\frac{\lambda}{n} \quad \mbox{ and } \quad \eE(\theta)=\frac{1}{n},
\end{align*}
where $ \lambda > 1 $ is finite and determined by the maximum sensor speed. The
average spatial sampling rate of the sensor is $ n $.  It is shown in~\cite{kumar17} that the conditional average of $\theta$ conditioned on $M=m$
is approximately $\frac{1}{m}$ and 
\begin{align*}
	\eE[M]\leq n + \lambda -1. 
\end{align*} 
%
%

%
%
A model of mobile sensing experiment is explained next. 
The field samples are recorded by performing $ N $ independent trials of mobile
sensing along the path $ \cal P $. Let $ \vec{ X }_i  $ be the set of field samples
collected and let  $ M_i $ be the number of samples
recorded  during trial $i$. After $ N $ trials we obtain 
\begin{align*}
	\vec{ X }_1 &:= [X(S_{1,1}, t_{1,1}),  X(S_{1, 2}, t_{1, 2}), \ldots, X(S_{1,
		M_1}, t_{1, M_1})]^T; \\
	\ldots, \nonumber \\
	\vec{ X }_N &:= \big[X(S_{N, 1}, t_{N, 1}),  X(S_{N, 2}, t_{N, 2}), \\ & \hspace{4.2cm} \ldots, X(S_{N,
		M_N}, t_{N, M_N})\big]^T.
\end{align*}
The location and time of the $ j $th field value recorded during trial $ i $ are
given by $ S_{ i, j } $ and  $ t_{ i, j } $ respectively. These sampling
locations for $ N $ different trials are generated by the same unknown renewal
process. The field values $ \vec{X}_1, \vec{X}_2 \ldots $ are statistically
independent; however, the field values in $\vec{X}_i$ for a given trial may be
dependent.
\section{Distribution Learning of Spatial Field}
It is of interest to learn the distribution of the field $ X( s, t) $ at a given
location $ s \in [ 0, 1] $. The field value $ X( s , t_0 ) $ at location $ s $,
at time $ t_0 \in \rR $ is estimated by $ \hat{ X }( s ) $ which is given by, 
\begin{align*}
	\hat{X}(s)= X\left(s_{ \lfloor ( M - 1 )s \rfloor+1}, t_{ 0 }\right),
\end{align*}
where $M$ is the random number of samples recorded along path $ \cal P $ and  $s_{
	\lfloor ( M - 1 )s \rfloor+1}, t_0 $ are the location and time at which $ {
	(\lfloor ( M - 1 )s \rfloor+1} )^{th} $ sample is recorded respectively. The
field samples $ \vec{X}_1, \vec{X}_2 \ldots  \vec{X}_N $ are available after $N$
trials of the mobile sensing experiment. The field value at location $ s \in {[ 0, 1 ] } $
for the $i^{th}$ trial is estimated by 
\begin{align}\label{x_i_hat}
	\hat{X}_i(s)= X\left(s_{ i, {\lfloor ( M_i - 1 )s \rfloor+1} }, t_{i, \lfloor (M_i -
		1)s \rfloor + 1} \right), 
\end{align}
while $ X_{i}(s) = X\left(s, t_{i, \lfloor (M_i - 1)s \rfloor + 1}\right) $. For
simplicity, the dependence on $ t $ has been dropped on the left-hand side as
the distribution learning accuracy is governed by the error in estimating sample
locations.
\begin{prop}\label{prop_1}
	\textit{(\cite{NIPS2019}, Theorem 1, Equation (11))}

	Consider the path ${\cal P} = [0, 1]$. Let $S_1, S_2, \ldots, S_M$ be the unknown
	sampling locations, and $\theta_1, \theta_2, \ldots, \theta_M$ be the
	inter-sample intervals. The sampling locations are points from an 
	unknown renewal process which satisfies $\eE[
	\theta_1 ] = \frac{1}{n} $ and $ 0 < \theta \leq \frac{\lambda}{n} $. Then the
	mean squared error between sampling location $ S_{\lfloor (M-1)s\rfloor + 1} $
	and location $ s $ is
	\begin{equation*}
		\eE\left[ \left| S_{\lfloor (M-1)s \rfloor + 1}-s \right|^2 \right ] \leq ( ( n
		+ \lambda - 1 ) s (1-s) + C )\frac{ \lambda^2 }{ n^2 },
	\end{equation*}
	where $ C $ is a constant. 
\end{prop}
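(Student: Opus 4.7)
The plan is to condition on the random sample count $M$, since on the event $\{M = m\}$ the target index $k := \lfloor(m-1)s\rfloor + 1$ becomes deterministic and satisfies $(m-1)s < k \le (m-1)s + 1$, so $|k/m - s| \le 1/m$. I would then apply the standard bias--variance decomposition
\[
\eE\bigl[|S_k - s|^2 \mid M = m\bigr] = \mathrm{Var}(S_k \mid M=m) + \bigl(\eE[S_k \mid M=m] - s\bigr)^2,
\]
bound each term separately, and finally take expectation over $M$ using $\eE[M] \le n + \lambda - 1$ from the excerpt together with the almost-sure lower bound $M \ge n/\lambda$ implied by $\theta \le \lambda/n$.

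For the bias, the approximation $\eE[\theta \mid M=m] \approx 1/m$ cited from~\cite{kumar17} gives $\eE[S_k \mid M=m] \approx k/m$, and combined with $|k/m - s| \le 1/m$ this yields a squared-bias contribution of order $1/m^2$. After averaging in $M$, using $M \ge n/\lambda$, this becomes a bounded term of order $\lambda^2/n^2$, which is absorbed into the additive constant $C\lambda^2/n^2$ in the stated bound.

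The variance term is the main obstacle. A naive bound $\mathrm{Var}(S_k \mid M=m) \le k(\lambda/n)^2 \le m s\,\lambda^2/n^2$ coming only from the boundedness $\theta_i \le \lambda/n$ misses the crucial $(1-s)$ factor. The key insight is that conditioning on the renewal process having exactly $m$ arrivals in $[0,1]$ forces $\sum_{i=1}^m \theta_i$ to lie in $(1-\lambda/n,\,1]$, which induces negative correlation among $\theta_1,\ldots,\theta_m$. This is the same mechanism that yields the uniform order-statistics variance $k(m-k+1)/((m+1)^2(m+2)) \approx s(1-s)/m$ in the Poisson special case, and more generally produces a Dirichlet-like variance-reduction factor of $(1-k/m) = 1-s$. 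The difficulty is quantifying this reduction for an arbitrary renewal process with explicit constants: I would either couple the renewal sequence to a rate-$n$ Poisson process, or compute the conditional second moment directly from the stopping identity $\sum_{i=1}^m \theta_i \le 1 < \sum_{i=1}^{m+1}\theta_i$, in either case absorbing the support cap into a multiplicative $\lambda^2$ factor to obtain $\mathrm{Var}(S_k \mid M=m) \le c\, s(1-s)\, m\,\lambda^2/n^2$. Combining this variance bound with the bias bound and taking expectation over $M$ using $\eE[M] \le n+\lambda-1$ produces the target inequality, with $C$ absorbing the sub-leading bias and boundary contributions.
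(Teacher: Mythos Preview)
The paper does not prove this proposition at all: it is quoted verbatim as a result from~\cite{NIPS2019} (Theorem~1, Equation~(11)) and used as a black box. So there is no in-paper argument to compare your proposal against.

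As a standalone proof sketch, your structure is reasonable---condition on $M=m$, decompose into bias and conditional variance, then average over $M$---and your identification of the $s(1-s)$ factor as the crux is exactly right. However, the proposal has a genuine gap precisely there. You correctly observe that the naive bound $\mathrm{Var}(S_k\mid M=m)\le k(\lambda/n)^2$ misses the $(1-s)$ factor, and you correctly attribute the missing factor to the negative correlation induced by conditioning on the stopping event $\{S_m\le 1<S_{m+1}\}$. But you then write that you ``would either couple the renewal sequence to a rate-$n$ Poisson process, or compute the conditional second moment directly from the stopping identity,'' without carrying out either one. Neither route is routine for a general renewal process with an \emph{unknown} inter-arrival law: a Poisson coupling does not obviously control second moments uniformly over all bounded-support renewal distributions, and a direct second-moment computation from the stopping rule requires handling the joint law of $(S_k,S_m,S_{m+1})$ conditionally on $\{S_m\le 1<S_{m+1}\}$, which is where all the work lies. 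The bias part and the final averaging over $M$ are fine, but until the conditional-variance inequality $\mathrm{Var}(S_k\mid M=m)\le c\,s(1-s)\,m\,\lambda^2/n^2$ is actually established, the argument is a plan rather than a proof.
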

Proposition \ref{prop_1} shows that the mean squared error between location $s$ on path ${\cal P}$ and the estimated
sampling location decreases with increasing spatial sampling rate $n$.  The
field is assumed to be Lipschitz continuous, so
\begin{equation}\label{Lipschitz_property}
	\left|X(s)-\hat{X}(s)\right|\leq \alpha \left|S_{\lfloor (M-1)s\rfloor
		+1}-s\right|,
\end{equation}
where $ \alpha $ is the Lipschitz constant. This shows that for a given value of
$ \alpha $, the error between the field value at location $ s $, $ X(s) $ and
the estimated field value at location $ s $, $ \hat{ X }(s) $ at a given time,
depends only on the error in sampling location. Using the Lipschitz property of the field
and Proposition \ref{prop_1}, a bound on the probability of error in estimating $X(s)$ by $\hat{X}(s)$
is given in \cite{NIPS2019}.
\begin{prop}\label{prop_2}
	\textit{(\cite{NIPS2019}, Theorem 1, Equation (14) and Theorem 2, Equation (19))}	
	Let $\hat{ X }(s)$ be the estimate of the field value at location $s$, estimated using field value sampled at unknown sampling location $S_{ \lfloor ( M - 1 )s \rfloor+1}$. Let  $ M $ be the random number of samples generated by the location-unaware mobile sensor on a finite length path ${\cal P}$. The sampling locations $ S_1, S_2, \ldots, S_M $ are generated by an unknown renewal process, where the inter-sample intervals $ \theta_1, \theta_2, \ldots \theta_M $ are such that $ \eE[\theta_1 ] = \frac{1}{n} $ and $ 0 < \theta \leq \frac{\lambda}{n} $. Then for every $s \in [0,1]$ and for any $\varepsilon>0$,
	\begin{align} \label{bound_on_prob_prop2}
		\pP \left( \left| X(s) - \hat{ X }(s)  ) \right| > \varepsilon \right ) \leq \frac{ \alpha^2 }{ \varepsilon^2 } ( ( n + \lambda - 1)s(1-s) + C )\frac{
			\lambda^2 }{ n^2 }  
	\end{align}
	and 
	\begin{equation}\label{bound_on_sup_prob_prop2}
		\pP\left(\sup_{s\in[0,1]}\left|\hat{X}(s)-X(s)\right|>\varepsilon\right)
		\leq\frac{32}{\beta}\frac{\alpha^2}{\varepsilon^2}(n+\lambda-1)\frac{\lambda^2}{n^2},
	\end{equation}
	where $\beta$ goes to 1 as $n \to \infty$.
\end{prop}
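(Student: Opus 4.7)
The strategy for the pointwise bound (\ref{bound_on_prob_prop2}) is to chain together the Lipschitz hypothesis, Markov's inequality, and Proposition \ref{prop_1}. Concretely, (\ref{Lipschitz_property}) gives $|X(s) - \hat X(s)| \le \alpha\,|S_{\lfloor(M-1)s\rfloor+1} - s|$, so the event $\{|X(s) - \hat X(s)| > \varepsilon\}$ is contained in $\{|S_{\lfloor(M-1)s\rfloor+1} - s|^2 > \varepsilon^2/\alpha^2\}$. Markov's inequality applied to the non-negative random variable $|S_{\lfloor(M-1)s\rfloor+1} - s|^2$, followed by the second-moment bound from Proposition \ref{prop_1}, produces (\ref{bound_on_prob_prop2}) in one line. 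This step is essentially routine.

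For the uniform bound (\ref{bound_on_sup_prob_prop2}), I would push the supremum through the Lipschitz inequality to obtain
\[
\sup_{s\in[0,1]} |\hat X(s) - X(s)| \le \alpha \sup_{s\in[0,1]} |S_{\lfloor(M-1)s\rfloor+1} - s|,
\]
and then study the right-hand side as a functional of the renewal partial sums. The key observation is that the index $\lfloor(M-1)s\rfloor+1$ is piecewise constant on the grid $\bigl[\tfrac{k-1}{M-1},\tfrac{k}{M-1}\bigr)$, so the supremum reduces to $\max_{1\le k\le M}\max\!\bigl(|S_k - \tfrac{k-1}{M-1}|,\,|S_k - \tfrac{k}{M-1}|\bigr)$. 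The plan is then to control the second moment of this maximum using the i.i.d.\ structure, the boundedness $0 < \theta \le \lambda/n$, and the mean $\eE[\theta]=1/n$ of the inter-sample intervals, and to apply a Chebyshev-type inequality to the squared maximum. The disappearance of the $s(1-s)$ factor and the appearance of the constant $32/\beta$ reflect that one is now paying for uniform control and that the argument conditions on the high-probability event that $M$ is close to its expected value $n + \lambda - 1$.

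The main obstacle is handling the supremum without blowing up the bound. A naive union bound over the $M$ grid points would introduce an unwanted factor of $M \approx n$, which is much worse than the constant that actually appears in (\ref{bound_on_sup_prob_prop2}). Avoiding this requires a maximal-type inequality for the centred partial sums $S_k - k/n$, for instance Doob's or Kolmogorov's maximal inequality applied to the martingale $\sum_{j\le k}(\theta_j - 1/n)$, so that one pays only for the terminal-index variance rather than for a union over indices. Quantifying $\beta$ and proving $\beta \to 1$ as $n \to \infty$ will be the most delicate piece: it amounts to a concentration statement for $M/(n+\lambda-1)$, ensuring that the conditioning used to decouple the randomness in $M$ from that in the locations can be removed at a cost that vanishes as the sampling rate grows.
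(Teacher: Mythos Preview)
The paper does not supply its own proof of Proposition~\ref{prop_2}; the result is simply quoted from \cite{NIPS2019} (Theorem~1, Eq.~(14) and Theorem~2, Eq.~(19)), so there is no in-paper argument to compare against.

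That said, your approach is sound. The pointwise bound (\ref{bound_on_prob_prop2}) via Lipschitz $\Rightarrow$ Chebyshev on $|S_{\lfloor(M-1)s\rfloor+1}-s|^2$ $\Rightarrow$ Proposition~\ref{prop_1} is correct and is essentially the only natural route. For the uniform bound (\ref{bound_on_sup_prob_prop2}), your plan---push the supremum through the Lipschitz inequality, reduce to a maximum over the finite index set via the piecewise-constant structure of $\lfloor(M-1)s\rfloor+1$, and then control $\max_k |S_k - k/n|$ by a Kolmogorov/Doob maximal inequality on the martingale of centred increments---is exactly the device that avoids the spurious factor of $M\approx n$ a naive union bound would produce. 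Your diagnosis of the constant $32/\beta$ as arising from (i) the maximal-inequality constant and (ii) the cost of conditioning on the event that $M$ is close to its mean, with $\beta\to 1$ following from concentration of $M$, is also correct in spirit. Since the paper defers all of this to the cited reference, a more detailed comparison is not possible here.
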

From Proposition \ref{prop_2}, ~\eqref{bound_on_prob_prop2} we can see that for a given accuracy level $\varepsilon>0$, probability of the error(absolute difference) between the field value $ X(s) $ and its estimate $ \hat{ X }(s) $ at location $ s $, at a given time, being greater than $\varepsilon$ has an upper bound of order $\mathcal{O}(\frac{1}{n})$. For a  given $\varepsilon$ this upper bound decreases with increasing spatial sampling rate $n$. In other words, as the sampling rate $n$ tends to infinity, $\hat{X}(s)$ converges to ${X}(s)$. The upper bound in $\eqref{bound_on_sup_prob_prop2}$ implies that as the sampling rate $n$ tends to infinity, $\hat{X}(s)$ converges \textit{uniformly over $s\in[0,1]$} to ${X}(s)$.
Having obtained the estimated field values $\hat{X}_1(s), \hat{X}_2(s), \ldots, \hat{X}_N(s)$ at location $s$ after $N$ trials, we estimate the cumulative distribution function(CDF) of field value at location $s$, denoted by $F_{\hat{X}(s)}(x)=\pP(\hat{X}(s)\leq x)$. Using the classical
Glivenko-Cantelli theorem, the estimated CDF at location $s$, $F_{\hat{X}(s)}(x)$ is obtained as 

\begin{align}\label{def_F_estimate}
	F_{\hat{X}(s)}(x) = \lim_{N \rightarrow \infty} \frac{1}{N} \sum_{i = 1}^N
	\mathbb{I} \left(\hat{X}_i(s) \leq x \right),
\end{align}
where $\mathbb{I}(.)$ denotes the indicator function. Let CDF of the field value at location $s$
be denoted by
\begin{align*}
	F_{X(s)}(x)=\pP(X(s)\leq x).
\end{align*}
Now, we investigate the difference between the CDF of field value $F_{X(s)}(x)$ at location $s$ and its estimate $F_{\hat{X}(s)}(x)$.
\begin{theorem}\label{theorem_1}
	Let $ M $ be the random number of samples generated when a location-unaware
	mobile sensor samples the field along the finite length path ${\cal P}$. Let $ S_1, S_2, \ldots, S_M $ be the
	sampling locations and $ \theta_1, \theta_2, \ldots \theta_M $ be the
	inter-sample intervals generated by the unknown renewal process such that $ \eE[
	\theta_1 ] = \frac{1}{n} $ and $ 0 < \theta \leq \frac{\lambda}{n} $. Then for
	every $x\in \rR, s \in [0,1]$ and for any $\varepsilon>0$,
	\begin{align}
		 \lvert F_{X(s)}(x) - F_{\hat{X}(s)}(x)\rvert 
		 \leq \frac{ \alpha^2 }{ \varepsilon^2 } ( ( n + \lambda - 1 )s( 1 - s ) + C
		)\frac{ \lambda^2 }{ n^2 } 
		 + 2\alpha\max_{x}( f_{X(s)}( x
		))\frac{\lambda}{n}\sqrt{((n+\lambda-1)s(1-s)+C)}. \label{upper_bound_theorem1} 
	\end{align}
\end{theorem}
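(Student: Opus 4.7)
The idea is to bound $|F_{X(s)}(x)-F_{\hat{X}(s)}(x)|$ by the probability that $x$ lies between $X(s)$ and $\hat{X}(s)$, then split that ``crossing'' event into a large-deviation piece controlled by Proposition~\ref{prop_2} and a ``$X(s)$ near $x$'' piece controlled by the density of $X(s)$ together with $\eE|\hat{X}(s)-X(s)|$.

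I would first write $F_{X(s)}(x) - F_{\hat{X}(s)}(x) = \eE\bigl[\mathbb{I}(X(s)\leq x) - \mathbb{I}(\hat{X}(s)\leq x)\bigr]$. The two indicators disagree exactly on the crossing event $E = \{(X(s)-x)(\hat{X}(s)-x)\leq 0\}$, so $|F_{X(s)}(x) - F_{\hat{X}(s)}(x)| \leq \pP(E)$. On $E$ the point $x$ lies between $X(s)$ and $\hat{X}(s)$, which forces $|X(s)-x| \leq |\hat{X}(s)-X(s)| =: |\Delta|$. Hence, for any $\varepsilon>0$,
\[
\pP(E) \leq \pP\bigl(|X(s)-x|\leq|\Delta|\bigr) \leq \pP(|\Delta|>\varepsilon) + \pP\bigl(|X(s)-x|\leq|\Delta|,\,|\Delta|\leq\varepsilon\bigr).
\]
Proposition~\ref{prop_2}, inequality~\eqref{bound_on_prob_prop2}, controls the first summand directly and produces the $\alpha^2/\varepsilon^2$ term of the theorem.

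For the second summand I would condition on $\Delta$ and invoke the bounded density of $X(s)$: given $|\Delta|=\delta_0\leq\varepsilon$, the event $\{|X(s)-x|\leq\delta_0\}$ has conditional probability at most $2\delta_0 \max_x f_{X(s)}(x)$, so taking expectation over $\Delta$ restricted to $\{|\Delta|\leq\varepsilon\}$ yields the upper bound $2\eE|\Delta|\cdot\max_x f_{X(s)}(x)$. Jensen's inequality gives $\eE|\Delta|\leq\sqrt{\eE|\Delta|^2}$, and combining with the Lipschitz property $|\Delta|\leq\alpha|S_{\lfloor(M-1)s\rfloor+1}-s|$ and Proposition~\ref{prop_1} yields $\eE|\Delta|\leq\alpha\tfrac{\lambda}{n}\sqrt{(n+\lambda-1)s(1-s)+C}$, matching the theorem's second term after multiplication by $2\max_x f_{X(s)}(x)$.

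The main obstacle is justifying the conditional density step: the argument uses that the conditional density of $X(s)$ given $\Delta$ is dominated by $\max_x f_{X(s)}(x)$. This is natural under the standard modeling assumption that the renewal sampling process, and hence the location error $\Delta$, is independent of the field realization $X(\cdot,t_0)$, but that independence should be invoked explicitly when making the conditional bound rigorous.
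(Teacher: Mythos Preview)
Your proposal is correct and follows essentially the same strategy as the paper: split the crossing probability at level $\varepsilon$, bound the large-deviation piece by Proposition~\ref{prop_2}, and bound the ``near $x$'' piece by conditioning on the location error, invoking the bounded density of $X(s)$, and then using Jensen together with Proposition~\ref{prop_1}. The only cosmetic differences are that the paper writes out one direction of the inequality (the other following by symmetry) and routes the second term through the auxiliary variable $Z=\min(\alpha D,\varepsilon)$ with a Markov-inequality step to get $\eE[Z]\leq 2\eE[\alpha D]$, whereas your symmetric crossing event handles both signs at once and produces the factor $2$ directly from the two-sided interval $|X(s)-x|\leq\delta_0$; both routes land on the same bound and both implicitly use the independence of the sampling process from the field that you correctly flag.
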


\begin{proof} We can split $F_{\hat{X}(s)}(x)$ as
	\begin{align*}
		& F_{ \hat{X}(s) }( x ) \nonumber \\
		& = \pP( \hat{X}(s) \leq x)\\
		& = \pP( \hat{X}(s) \leq x, X(s) \leq x) + \pP( \hat{X}(s) \leq x, X(s) > x)\\
		& \leq \pP( X(s) \leq x ) + \pP( \hat{X}(s) \leq x, X(s) > x)\\
		& \leq F_{X(s)}(x) + \pP( \hat{X}(s) \leq x, X(s) > x + \varepsilon)  
		 + \pP( \hat{X}(s) \leq x, x < X(s) \leq x + \varepsilon).
	\end{align*}
	Therefore, 
	\begin{align} \label{up2_thm1}
		&  \lvert F_{ \hat{X}(s) }( x ) - F_{X(s)}(x) \rvert \leq    \underbrace{\pP(  | \hat{X}(s)-X(s) |  >
			\varepsilon)}_{ft} 
		 + \underbrace{\pP( \hat{X}(s) \leq x, x < X(s) \leq x +
			\varepsilon)}_{st}.
	\end{align}
	An upper bound on ($ft$) is given in Proposition \ref{prop_2}, \eqref{bound_on_prob_prop2}. We propose the following lemma that gives an upper bound on ($st$). 
	\begin{lemma}\label{lemma_1}
		Let $\hat{ X }(s)$ be the estimate of the field value at location $s$, estimated using field value sampled at unknown sampling location $S_{ \lfloor ( M - 1 )s \rfloor+1}$. Let  $ M $ is the random number of samples generated by the location-unaware mobile sensor on a finite length path ${\cal P}$. The sampling locations $ S_1, S_2, \ldots, S_M $ are generated by an unknown renewal process, where the inter-sample intervals $ \theta_1, \theta_2, \ldots \theta_M $ are such that $ \eE[\theta_1 ] = \frac{1}{n} $ and $ 0 < \theta \leq \frac{\lambda}{n} $. Then for
		every $x\in \rR, s \in [0,1]$ and for any $\varepsilon>0$,
		\begin{align*} 
			& \pP( \hat{X}(s) \leq x, x < X(s) \leq x + \varepsilon) 
			\leq 2\alpha\max_{x}( f_{X(s)}( x
			))\frac{\lambda}{n}\sqrt{((n+\lambda-1)s(1-s)+C)}, 
		\end{align*}
		where $ f_{X(s)} $ is the probability density function of the field value at
		location $ s $.
	\end{lemma}
	\begin{proof}
		A detailed proof of Lemma \ref{lemma_1} is given in Appendix \ref{proof_lemma_1}. Owing to the Lipschitz nature of the field, from \eqref{Lipschitz_property} it holds that if $\hat{X}(s) \leq x$, then  $X(s) \leq x + \alpha\left|S_{\lfloor (M-1)s\rfloor
			+1}-s\right|$. So,
		\begin{align*} 
			\pP( \hat{X}(s) \leq x, x < X(s) \leq x + \varepsilon) \leq \pP( x < X(s) \leq x + Z), \\
		\end{align*} 
		where $Z = \min\left(\varepsilon, \alpha\left|S_{\lfloor (M-1)s\rfloor +1}-s\right|\right)$. From Proposition \ref{prop_1}, the mean squared error between $s$ and its estimate $S_{\lfloor (M-1)s\rfloor +1}$ is of order $\mathcal{O}(\frac{1}{n})$. Thus it is shown in Appendix \ref{proof_lemma_1}
		that $\eE[Z]$ has an upper bound of order $\mathcal{O}(\frac{1}{\sqrt{n}})$ and $\pP( x < X(s) \leq x + Z)$ is bounded from above by $2\alpha\max_{x}( f_{X(s)}( x
		))\frac{\lambda}{n}\sqrt{((n+\lambda-1)s(1-s)+C)}$, which is independent of $\varepsilon$. 
	\end{proof}
	The Lemma follows by substituting the upper bound on ($ft$) and ($st$) in \eqref{up2_thm1}. 
\end{proof}
The upper bound given in the RHS of \eqref{upper_bound_theorem1} has two terms. The first term increases with decreasing value of $\varepsilon$ and the second term is of order $\mathcal{O}\left(\frac{1}{\sqrt n}\right)$. Thus, Theorem \ref{theorem_1} implies that the upper bound on the pointwise absolute difference between the CDF of field value, $F_{X(s)}(x)$ and its estimate, $F_{ \hat{X}(s) }( x )$,  at location $s$, is at least of order $\mathcal{O}\left(\frac{1}{\sqrt n}\right)$. The value of $\varepsilon$ can be chosen such that the upper bound in \eqref{upper_bound_theorem1} is of order $\mathcal{O}\left(\frac{1}{\sqrt n}\right)$. For example if $\varepsilon = n^{-\frac{1}{4}}$,
\begin{align*}  
	 \lvert F_{ \hat{X}(s) }( x ) - F_{X(s)}(x) \rvert 
	 & \leq 
	\alpha^2\sqrt{n}  ( ( n + \lambda - 1 )s( 1 - s ) + C )\frac{
		\lambda^2 }{ n^2 } \nonumber \\
	 & \hspace{5.3cm} + 2\alpha\max_{x}( f_{X(s)}( x
	))\frac{\lambda}{n}\sqrt{((n+\lambda-1)s(1-s)+C)}. 
\end{align*}
As the spatial sampling rate $n$ tends to infinity, the upper bound on the error in estimation of CDF in \eqref{upper_bound_theorem1} tends to zero for some $\varepsilon >0$. Thus, $F_{\hat{X}(s)}(x)$ converges to $F_{X(s)}(x)$ as $n$ tends to infinity.  
The upper bound given in the RHS of \eqref{upper_bound_theorem1} depends on $s$ and has a maximum at $s=1/2$. 
By taking supremum over $ s $ we get,
\begin{align*}  
	 \sup_{s\in[0,1]}\lvert F_{\hat{X}(s)}(x)-F_{X(s)}(x)\rvert 
	 \leq 
	\frac{ \alpha^2 }{ \varepsilon^2 } \left ( \frac{( n + \lambda - 1 )}{4} + C
	\right )\frac{ \lambda^2 }{ n^2 }
	 + 2\alpha\max_{x,s}( f_{X(s)}( x ))\frac{\lambda}{n}\sqrt{ \left (\frac{( n +
			\lambda - 1 )}{4}+C \right )}.
\end{align*}
This implies that as the sampling rate $n$ tends to infinity, for some $\varepsilon >0$, $F_{\hat{X}(s)}(x)$ converges uniformly over $s\in[0,1]$ to $F_{X(s)}(x)$.

Theorem \ref{theorem_1} gives an upper bound on the error between $F_{X(s)}(x)$ and its estimate $F_{\hat{X}(s)}(x)$, defined in \eqref{def_F_estimate} for number of trials $N \to \infty$. However, the number of trials $N$ is finite in practice. The field values $ \vec{X}_1, \vec{X}_2, \ldots, \vec{X}_N $  are obtained after
$N$ trials of the mobile sensing experiment. The field value at a given location
$s$ on path ${ \cal P }$, for trial $i$ is estimated by $\hat{X_i}(s)$ given in
\eqref{x_i_hat}. Using $N$ estimates of the field value, the empirical CDF at
location $s$ is 
\begin{align}\label{empirical_finite}
	F_{\hat{X}(s),N}(x) = \frac{1}{N} \sum_{i = 1}^N \mathbb{I} \left(\hat{X}_i(s)
	\leq x \right).
\end{align}
\begin{theorem}\label{theorem_2}
	Let $ M $ be the random number of samples generated when a location-unaware
	mobile sensor samples the field along the finite length path ${\cal P}$. Let $ S_1, S_2, \ldots, S_M $ be the
	sampling locations and $ \theta_1, \theta_2, \ldots \theta_M $ be the
	inter-sample intervals generated by the unknown renewal process such that $ \eE[
	\theta_1 ] = \frac{1}{n} $ and $ 0 < \theta \leq \frac{\lambda}{n} $. Then for
	every $x\in \rR, s \in [0,1]$ and for any $\delta>0$, 
	\begin{align}
		 \sup_{x \in \rR } \lvert F_{\hat{X}(s),N}(x) - F_{X(s)}(x) \rvert 
		 \leq \frac{A^2}{\varepsilon^2} + 2\max_{x}( f_{X(s)}(x))A+ \sqrt{\frac{\log(\frac{2}{\delta})}{2N}} \label{upper_bound_thorem2}
	\end{align}
	with probability at least $1-\delta$, where $ A = \alpha\sqrt{((n+\lambda-1)s(1-s)+C)}\frac{\lambda}{n} $.
	
\end{theorem}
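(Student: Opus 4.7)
The plan is to split the deviation via the triangle inequality into a statistical term (empirical-to-true CDF of $\hat{X}(s)$) and a structural term (CDF of $\hat{X}(s)$ versus CDF of $X(s)$):
\begin{align*}
\sup_{x} \lvert F_{\hat{X}(s),N}(x) - F_{X(s)}(x) \rvert
&\leq \sup_{x} \lvert F_{\hat{X}(s),N}(x) - F_{\hat{X}(s)}(x) \rvert \\
&\quad + \sup_{x} \lvert F_{\hat{X}(s)}(x) - F_{X(s)}(x) \rvert.
\end{align*}
Each piece can then be controlled using tools already available.

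For the first supremum, I would invoke the Dvoretzky--Kiefer--Wolfowitz (DKW) inequality. The $N$ trials are independent by the model of Section~3, and the law of $\hat{X}_i(s)$ depends only on the $i$th renewal process and the field values recorded on trial $i$, so $\hat{X}_1(s),\ldots,\hat{X}_N(s)$ are i.i.d.\ draws from $F_{\hat{X}(s)}$. DKW therefore yields
$$\pP\!\left(\sup_{x} \lvert F_{\hat{X}(s),N}(x) - F_{\hat{X}(s)}(x) \rvert > \eta\right) \leq 2 e^{-2 N \eta^{2}},$$
and setting $\eta = \sqrt{\log(2/\delta)/(2N)}$ produces the $\sqrt{\log(2/\delta)/(2N)}$ contribution to \eqref{upper_bound_thorem2}, valid with probability at least $1-\delta$.

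For the second supremum, Theorem~\ref{theorem_1} already delivers a deterministic pointwise bound. With $A = \alpha\sqrt{(n+\lambda-1)s(1-s)+C}\,\frac{\lambda}{n}$, its first summand collapses to $A^{2}/\varepsilon^{2}$ and its second to $2\max_{x} f_{X(s)}(x)\,A$. Since $\max_{x} f_{X(s)}(x)$ already dominates the $x$-dependence, taking the supremum over $x$ preserves the bound. Adding the two contributions yields exactly \eqref{upper_bound_thorem2}.

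I expect the only genuine care to lie not in any deep calculation but in cleanly justifying the i.i.d.\ hypothesis required by DKW: one must check that the $N$ trials produce field samples, sample counts $M_{i}$, and renewal arrival locations whose joint law is common across $i$ (so that $\hat{X}_{i}(s)$ is identically distributed) and that the trials' mutual independence transfers to mutual independence of the $\hat{X}_{i}(s)$ (since $\hat{X}_{i}(s)$ is measurable with respect to the $i$th trial alone). Once this is in place, combining Theorem~\ref{theorem_1} with DKW and a single union bound is routine.
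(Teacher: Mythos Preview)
Your proposal is correct and follows essentially the same route as the paper: a triangle-inequality split into a structural term bounded deterministically by Theorem~\ref{theorem_1} and a statistical term handled via the DKW inequality, then combined to give \eqref{upper_bound_thorem2}. The paper phrases the combination inside a single probability (subtracting the deterministic bound from $\xi$ before applying DKW) rather than bounding the two suprema separately and adding, but this is only a cosmetic difference.
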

\begin{proof}
	The error in estimation of $F_{X(s)}(x)$ using finite number of samples is 
	\begin{align}
		\pP \left( \sup_{x \in \rR } \lvert F_{\hat{X}(s),N}(x) - F_{X(s)}(x) \rvert >
		\xi \right)  
		 \leq \pP \Big( \sup_{x \in \rR } \lvert F_{\hat{X}(s),N}(x) - F_{\hat{X}(s)}(x) \rvert  
		 + \sup_{x \in \rR } \lvert F_{\hat{X}(s)}(x) - F_{X(s)}(x) \rvert >
		\xi \Big). \label{error_in_dist}
	\end{align} 
	The upper bound on error in CDF estimation given in Theorem \ref{theorem_1}, \eqref{upper_bound_theorem1} is $
	\frac{A^2}{\varepsilon^2} + 2\max_{x}( f_{X(s)}( x ))A $, where $ A =
	\alpha\sqrt{((n+\lambda-1)s(1-s)+C)}\frac{\lambda}{n} $. Substituting the upper bound in
	\eqref{error_in_dist} gives,
	\begin{align*}
		\pP \left( \sup_{x \in \rR } \lvert F_{\hat{X}(s),N}(x) - F_{X(s)}(x) \rvert >
		\xi \right) 
		 \leq \pP \Big( \sup_{x \in \rR } \lvert F_{\hat{X}(s),N}(x) - F_{\hat{X}(s)}(x) \rvert 
		 > \xi -\frac{A^2}{\varepsilon^2} - 2\max_{x}( f_{X(s)}( x ))A \Big).
	\end{align*}
	By applying DKW inequality (from \cite{massart}) to RHS of above equation we get
	\begin{align*}
		\pP \left( \sup_{x \in \rR } \lvert F_{\hat{X}(s),N}(x) - F_{X(s)}(x) \rvert >
		\xi \right) 
		 \leq 2e^{ - 2 N {\left( \xi -\frac{A^2}{\varepsilon^2} - 2\max_{x}( f_{X(s)}(
				x ))A \right )}^2 }.
	\end{align*}
	If $\delta = 2e^{ - 2 N {\left( \xi -\frac{A^2}{\varepsilon^2} - 2\max_{x}( f_{X(s)}(
			x ))A \right )}^2 } $, then $\xi = \frac{A^2}{\varepsilon^2} + 2\max_{x}( f_{X(s)}(x))A + \sqrt{\frac{\log(\frac{2}{\delta})}{2N}}$. Thus,
	\begin{align*}
		& \pP \left( \sup_{x \in \rR } \lvert F_{\hat{X}(s),N}(x) - F_{X(s)}(x) \rvert > \xi
		\right) \leq \delta.
	\end{align*}
	Thus, the theorem follows from the above inequality. 
\end{proof}	
The upper bound on the RHS of \eqref{upper_bound_thorem2}, in Theorem \ref{theorem_2}, comprises of three terms. The first two terms constitute the upper bound on the pointwise absolute error between $F_{X(s)}(x)$ and its estimate $F_{\hat{X}(s)}(x)$ given in Theorem \ref{theorem_1}. The value of $\varepsilon$ can be chosen such that it is of order $\mathcal{O}(\frac{1}{\sqrt{n}})$. The third term is of order $\mathcal{O}(\frac{1}{\sqrt{N}})$. Thus, Theorem \ref{theorem_2} implies that, the supremum of the pointwise absolute error between $F_{X(s)}(x)$ and its estimate $F_{\hat{X}(s),N}(x)$, taken over $x$, is less than an upper bound of order $\mathcal{O}\left(\frac{1}{\sqrt{n}} + \frac{1}{\sqrt{N}}\right)$ with probability atleast $1-\delta$. Thus, the error in estimating $F_{X(s)}(x)$ by $F_{\hat{X}(s),N}(x)$ decreases with the increasing spatial sampling rate $n$ and the number of trials $N$. Increasing the spatial sampling rate leads to better estimation of field values, whereas increasing the number of trials reduces the error in learning the CDFs using their empirical estimates. 

In Section \ref{simulation} we verify our claim by running our distribution learning method on a simulated spatio-temporal field. Using a custom generated dataset we verify the performance of our distribution learning method in section \ref{experiment}.   

\section{Simulation of Distribution Learning Method with Samples from Location-Unaware Mobile Sensor\label{simulation}} 
In this section, we apply our distribution learning method to learn the distribution of a simulated spatio-temporal field $X(s, t)$. We sample this field at locations generated by a renewal process and estimate the CDF of field value at some point $s\in [0,1]$ using these samples.  
A spatio-temporal field $ X(s, t)$ is generated, where 
\begin{equation*}
	X(s,t) =  a_0  + \sum_{k=1}^{5}  a_k(t) cos(2\pi kfs).
\end{equation*}
The constants $ a_0 = 500 $, $f=5$ and $ a_k $  is a uniform random variable whose
parameter is a function of time $t$. The field $X(s,t)$ is a summation of cosine functions 
varying with spatial locations having time varying amplitudes. It is of interest to learn 
the CDF of field value at
location $s$, $F_{X(s)}$. However, $N$ trials of the location-unaware mobile
sensing experiment are simulated and we can only learn the empirical CDF
$F_{X(s), N}$ using $N$ samples of $X(s,t)$ at location $s$. Let $m_i$ be the
random number of samples generated during trial $i$, where $i =1,2, \ldots, N$.
Let $s_{i, 1}, s_{i, 2}, \ldots, s_{i, m_i} $ be the randomly generated sampling
locations and $t_{i, 1}, t_{i, 2}, \ldots, t_{i, m_i} $ be the sampling times
for trial $i$. Sampling location for trial $i$ and sample $j$ is $s_{i, j} =
\sum_{l=1}^{j}\theta_{i, l} $, where $\theta_{i, l}$ is the intersample interval
generated by a Triangular distribution with mean $\frac{1}{n}$, lower limit $0$
and upper limit $\frac{2}{n}$.  Let $k = \lfloor (M-1)s \rfloor +1$. For the
$i^{th}$ trial, the sample at sampling location $s_{i, k}$, $ \hat{X}_i(s) = X(
s_{ i, k}, t_{ i , k} )$ is the estimate of the field value $ X_i(s) = X( s, t_{
	i, k} )$ at location $s$. We compare the empirical CDFs $F_{X(s),N}(x) =
\frac{1}{N} \sum_{i = 1}^N \mathbb{I}\left(X_i(s) \leq x \right)$ and
$F_{\hat{X}(s),N}(x)$ given in \eqref{empirical_finite}.

Location-unaware mobile sensing experiment is simulated for $N=50$ and $N=500$
and for spatial sampling rates $n=10, 100, 1000$ and $10000$ samples/m. The
experiment is repeated 200 times for each value of $N$ and $n$.
Average absolute pointwise difference in CDF,
avg$_x{\left|F_{X(s),N}(x) - F_{\hat{X}(s),N}(x) \right|}$ and
maximum absolute pointwise difference in CDF, $
\max_{x}|F_{X(s), N }( x ) - F_{\hat{X}(s), N}(x) |$ are averaged over 200
iterations for each value of $N$ and $n$.
\begin{figure*}[!t]
	\centering
	\subfloat[]{\includegraphics[scale=0.6]{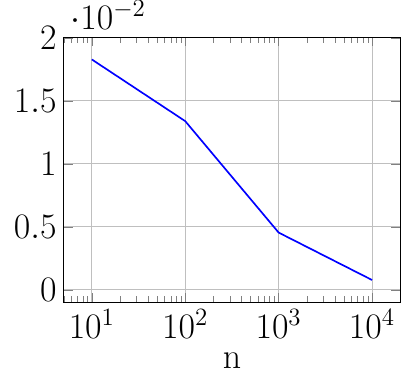}%
		\label{one}}
	\qquad \quad
	\subfloat[]{\includegraphics[scale=0.6]{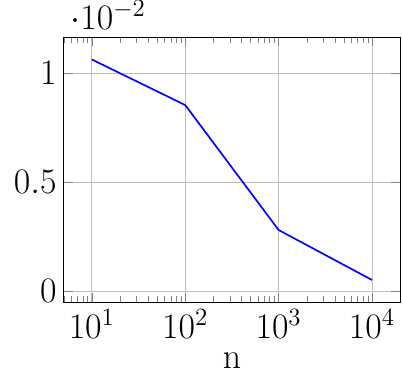}%
		\label{two}	}
	\caption{Average absolute pointwise difference in CDF,
		avg$_x{\left|F_{X(s),N}(x) - F_{\hat{X}(s),N}(x) \right|}$ plotted for different
		values of $n$ and  (a) $N=50$ and (b) $N=500$.}
	\label{avg_diff_cdf}
\end{figure*}
\begin{figure*}[!t]
	\centering
	\subfloat[]{\includegraphics[scale=0.6]{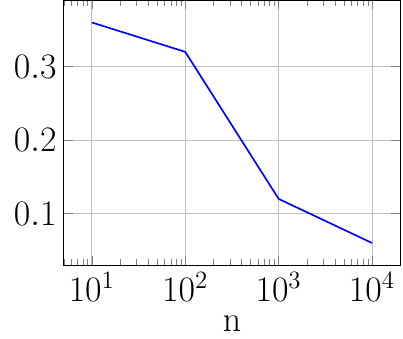}%
	}
	\qquad \quad
	\subfloat[]{\includegraphics[scale=0.6]{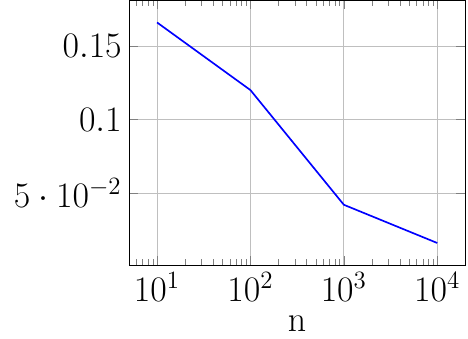}%
	}
	\caption{Maximum absolute pointwise difference in CDF, $
		\max_{x}|F_{X(s), N }( x ) - F_{\hat{X}(s), N}(x) |$ plotted for different
		values of $n$ and  (a) $N=50$ and (b) $N=500$.}
	\label{max_diff_cdf}
\end{figure*}

The results are plotted in Figure $\ref{avg_diff_cdf}$ and Figure $\ref{max_diff_cdf}$ where we observe that as
the spatial sampling rate increases the error in CDF estimation of field samples as a
function of spatial location decreases. We also observe that the error in CDF
estimation decreases with the number of trials $N$. 

\section{Acoustic Noise Distribution Learning Experiment with a Location-Unaware Mobile Soundmeter\label{experiment}}
In this section, an experiment is designed to validate the distribution learning method.
The mobile sensing experiment is performed along a fixed length path ${\cal
	P}_1$ shown in the map in Figure \ref{map}. The field value measured is the
acoustic-noise level along path ${\cal P}_1$. A sound level meter measuring
acoustic-noise level every second is carried along the path. Path ${\cal P}_1$
is a closed path along which there is a substantial variation in the acoustic-noise levels. It is not necessary for the path to be closed but it has to be of fixed length. Sound level meter by BAFX products (Model no: BAFX3608) is used in the
experiment. Its specifications are given in Table \ref{tab1}. It has memory to
store data but does not have any GPS or localization tool.  
\begin{table}[b] 
	\centering
	\caption{ \label{tab1} Specifications of Sound Meter.}
	\begin{tabular}{llll}
		\toprule \\
		Range: 30-130dB & 
		Sampling Rate: 1 per sec &\\
		Memory: 4700 readings &
		Accuracy: $\pm$ 1.5 dB   \\       
		\bottomrule
	\end{tabular}
\end{table}
\begin{figure}[h]
	\begin{center}
		\includegraphics[scale=0.15]{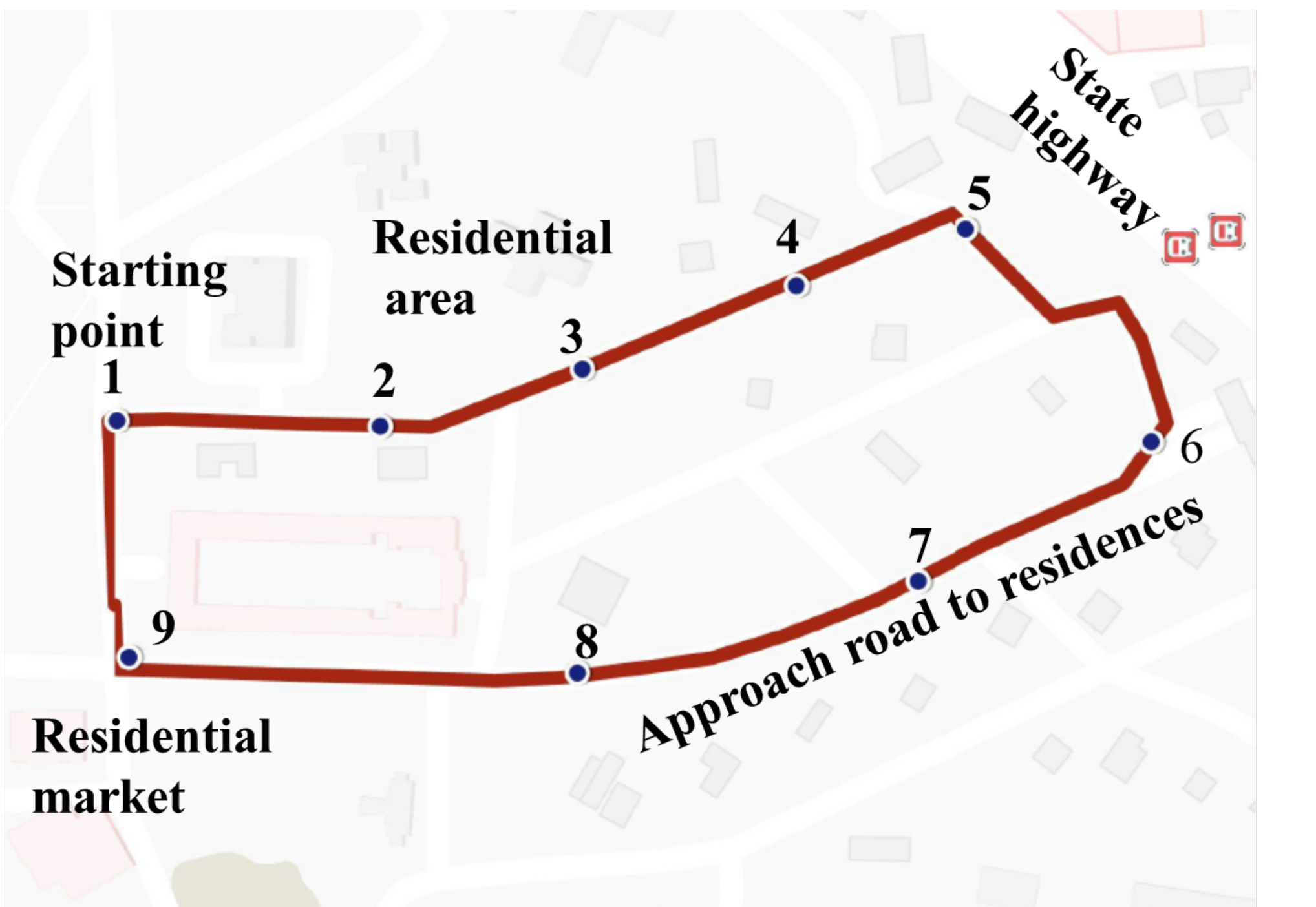}
		\caption{\label{map} Path along which acoustic noise levels are recorded.  The
			locations marked in the map with numbers are used for measurement using a fixed
			sensor.}
	\end{center}
\end{figure}
The sound level meter measures one sample per second. The distance between
locations of two different samples on ${\cal P}_1$ depends on the speed at which
the sound level meter traverses the path. Thus by varying the sensor speed,
the average spatial sampling rate $n$ of the sound level meter is varied. The sound
level meter is moved along path ${\cal P}_1$ at two different average speeds to
get two different data sets.
\begin{figure}[h]
	\begin{center}
		\centering
		\includegraphics[scale=0.25]{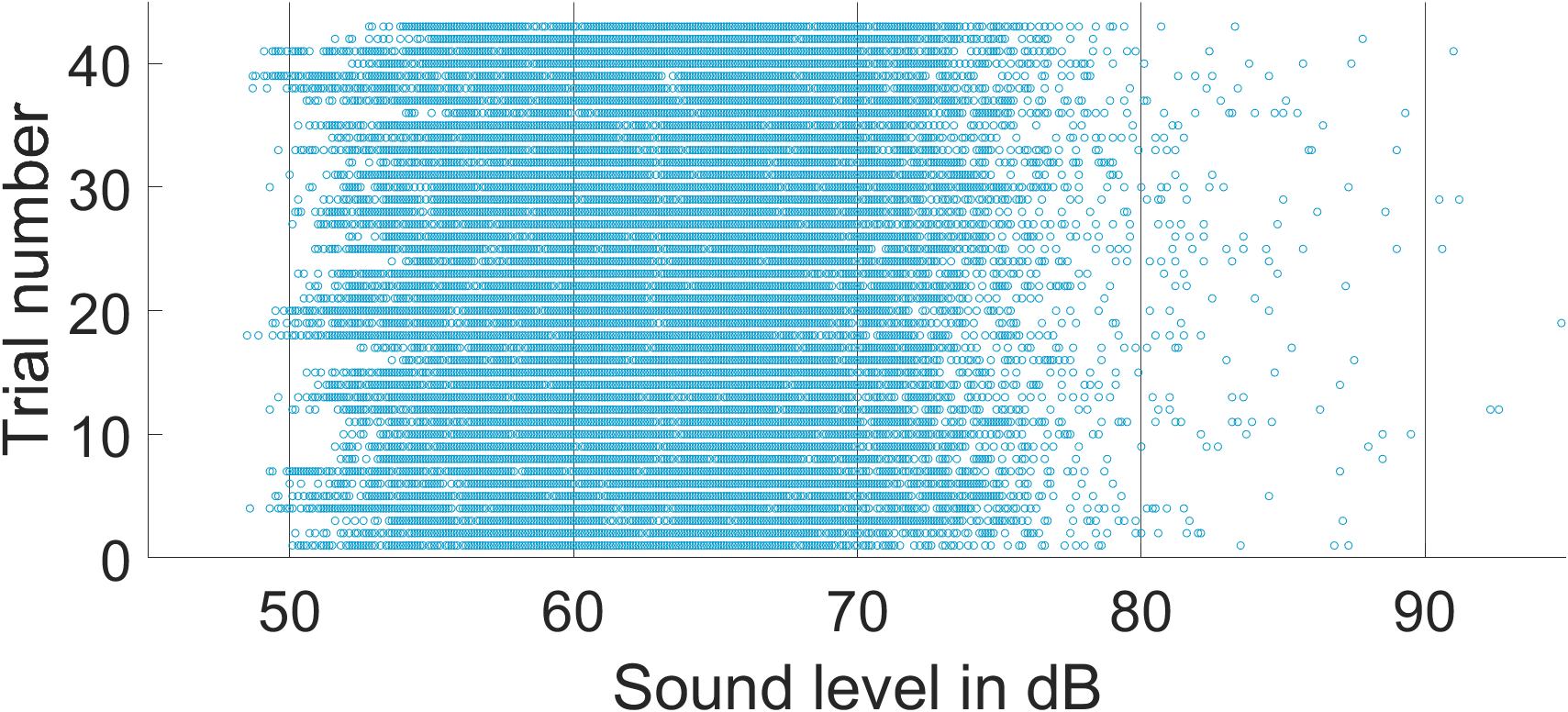}
		\caption{\label{scatterplot} Scatter plot of acoustic noise levels along the path ${\cal P}_1$ in
			Figure \ref{map} for each trial in \textit{Data set 1}.}
	\end{center}
\end{figure}
\begin{figure}[h]
	\begin{center}
		\centering
		\includegraphics[scale=0.23]{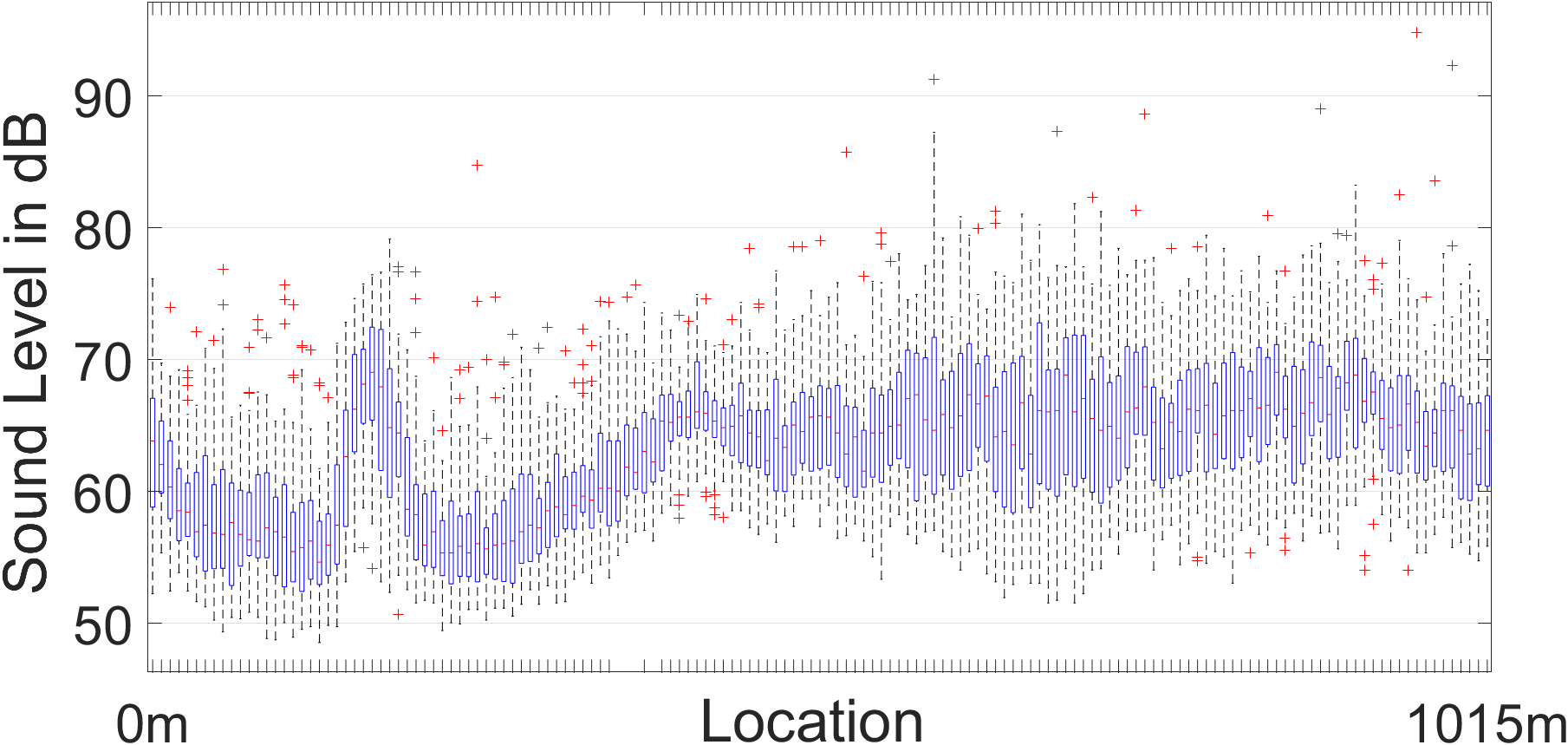}
		\caption{\label{boxplot} Box plot plotted using samples from the mobile sound
			level meter in \textit{Data set 1} along the path ${\cal P}_1$ in Figure \ref{map} of length 1015m.}
	\end{center}
\end{figure}
\textit{Data sets:}  \textit{Data set} 1 was created by walking along path ${\cal
	P}_1$ with the sound level meter. The trials of this experiment were performed 43
times along the same path on different days. Let $n_1$  be the average spatial
sampling rate for \textit{Data set} 1. \textit{Data set} 2 was created by cycling
along path ${\cal P}_1$ with the sound level meter. In this case as well 43 trials
were performed along the same path on different days. Let $n_2$  be the average
spatial sampling rate for \textit{Data set} 2. Faster the sensor speed, lower will
be the spatial sampling rate of the mobile sensor, sampling at the uniform rate
in time. Therefore,
\begin{equation*}
	n_2<n_1.
\end{equation*}

The number of trials performed N=43. The samples obtained in
\textit{Data set} 1 and \textit{Data set} 2 can be used to estimate the CDF of
acoustic noise levels at locations marked by numbers one to nine on path ${\cal
	P}_1$ as shown in Figure \ref{map} using \eqref{empirical_finite}. Since the CDF of
sound levels at a given location on path ${\cal P}_1$ is not know, a static
sound level meter was used to measure acoustic noise levels at locations marked with
numbers one to nine along path ${\cal P}_1$ shown in Figure \ref{map}. These
samples are used to compute the empirical CDF of acoustic noise levels at the
numbered locations on path ${\cal P}_1$. The CDF computed at a numbered location
along path ${\cal P}_1$ using samples from the fixed sensor is compared with the
CDF computed using \textit{Data set} 1 and \textit{Data set} 2 at that location.

\begin{figure*}
	\centering
	\subfloat[]{\includegraphics[scale=0.55]{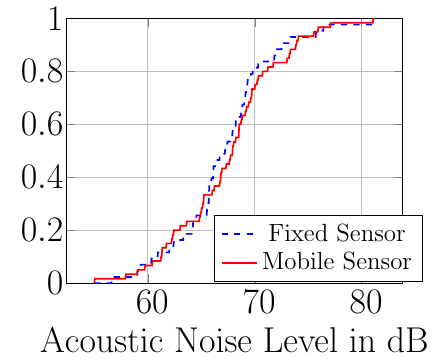}%
		\label{fig:cyc_wal_1}}
	\hfil
	\subfloat[]{\includegraphics[scale=0.55]{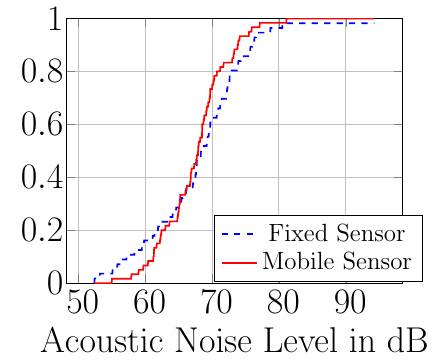}%
		\label{fig:cyc_wal_2}}
	\caption{Comparison of empirical CDF of sound
		levels obtained by experimentation at location 9 along path ${\cal P}_1$ in
		Figure \ref{map}, for two different sampling rates of mobile sensor:
		(\textit{a}) fixed sensor  versus mobile sensor for \textit{Data set} 1 (Higher
		spatial sampling rate $n_1$) (\textit{b}) fixed sensor versus mobile sensor for
		\textit{Data set} 2 (Lower spatial sampling rate $n_2$).}
	\label{cyc_wal}
\end{figure*}
The range, median and quartiles of acoustic noise levels along path ${\cal P}_1$
can be visualised using a box plot illustrated in Figure \ref{boxplot}. The
variation in average noise level is about 20 dB (ratio of 100) while the
variation in dynamic range exceeds 30dB (ratio of 1000). Figure
\ref{scatterplot} shows the variation of acoustic noise levels during each trial of the
mobile sensing experiment in \textit{Data set 1}  

In Figure \ref{cyc_wal}, the distribution learning method is compared for two
different spatial sampling rates $n_1$ and $n_2$. The maximum absolute pointwise error in
learning the empirical CDFs in Figure \ref{fig:cyc_wal_1} is less than the
maximum absolute pointwise error between the empirical CDFs in Figure
\ref{fig:cyc_wal_2}. This shows that the error in learning the empirical CDFs is
lower for \textit{Data set} 1 having higher spatial sampling rate compared to
\textit{Data set} 2. This validates  our theoritical claims. Through experiments we have verified the impact of spatial sampling rate on distribution learning. However,  due to limited amount of data, the effect of increasing the number of trials is not investigated in the experiment.

\section{Conclusion}

We have proposed a data-driven method for learning the statistical distribution
of a time-varying spatial field as a function of spatial location under some
simple assumptions. A bounded and Lipschitz continuous time varying spatial field
is sampled using a location-unaware mobile sensor. The unknown sampling locations
are modeled by an unknown renewal process. We have shown that the error in
learning the distribution of field value as a function of space decreases with increasing
spatial sampling rate $n$ of the location-unaware mobile sensor and number of trials of the mobile sensing experiment $N$. 
(i) We have a bound of $\mathcal{O}\left(\frac{1}{\sqrt{n}}\right)$ on the maximum error in estimating the CDF of the spatial field, as a function of spatial location. This is an improvement over the bound of $\mathcal{O}\left(\frac{1}{n^{1/3}}\right)$ given in \cite{NIPS2019}. This bound holds as $N$ goes to infinity. 
(ii) In addition, we show that with probability atleast $1 - \delta$, the error in estimating the CDF of the spatial field as a function of spatial location is of $\mathcal{O}\left(\frac{1}{\sqrt n} + \frac{1}{\sqrt N}\right)$ for a finite $N$. We validate this
claim using simulations. We have created a data set with 43 trials of the mobile
sensing experiment measuring acoustic noise levels along a fixed length path
using a location-unaware mobile sound level meter. The experimental results also
validate the distribution learning method.

\section*{Acknowledgements}
The author is grateful for the guidance and support received from Animesh Kumar\footnote{email: animekum@outlook.com} in this work.

\bibliographystyle{unsrt}
\bibliography{References}

	\appendix
    \section
	\textit{Proof of Lemma} 1: \label{proof_lemma_1}
	The field is assumed to be Lipschitz continuous, so 
	\begin{align} \label{Lipschitz}
		| X(s)-\hat{X}(s) | \leq \alpha \left| S_{\lfloor (M-1)s\rfloor +1} - s \right|
		= \alpha D,
	\end{align}
	where $D=\left| S_{\lfloor (M-1)s\rfloor + 1} - s \right|$. From
	\eqref{Lipschitz} we can write, 
	\begin{equation*}
		X(s) - \alpha D \leq \hat{X}(s).
	\end{equation*}
	So,
	\begin{align*}
		 \pP( \hat{X}(s) \leq x, x < X(s) \leq x + \varepsilon) 
		&  \leq \pP( X(s) - \alpha D \leq x, x < X(s) \leq x + \varepsilon) \nonumber \\
		& = \pP(x < X(s) \leq x + \min(\alpha D, \varepsilon) ).
	\end{align*}
	Let the random variable $ Z = \min(\alpha D, \varepsilon)$. The conditional probability that $X(s)$ lies between $x$ and $x+Z$, given $Z=z$, is 
	\begin{align}
		\pP(x < X(s) \leq x + Z | Z=z ) \leq z\max_{x}( f_{X(s)}( x ) ). \label{upperbound1_lemma1} 
	\end{align} 
	Probability that $X(s)$ lies between $x$ and $x+Z$ is given by 
	\begin{align}
		\pP(x < X(s) \leq x + Z ) = \eE[\pP(x < X(s) \leq x + Z | Z)], \label{equality1_lemma1}
	\end{align}
	where expectation is taken over $Z$. From \eqref{upperbound1_lemma1} and \eqref{equality1_lemma1} we get
	\begin{align}
		\pP(x < X(s) \leq x + Z ) \leq \eE[Z]\max_{x}( f_{X(s)}( x ) ). \label{mainbound_lemma1}
	\end{align}
	The random variable $Z = \min(\alpha D, \varepsilon)$ can be written as
	\begin{equation*}
		Z = \varepsilon .\mathbb{I}(\alpha D > \varepsilon) + \alpha D.
		\mathbb{I}(\alpha D \leq  \varepsilon),
	\end{equation*}
	where $\mathbb{I}(.)$ denotes the indicator function. The conditional expectation of $Z$ given $D$ is 
	\begin{equation*}
		\eE[Z|D=d] = \varepsilon .\mathbb{I}(\alpha d > \varepsilon) + \alpha d.
		\mathbb{I}(\alpha d \leq  \varepsilon).
	\end{equation*}
	So, 
	\begin{align*}
		\eE[Z] & =\varepsilon.\underbrace{\pP(\alpha D > \varepsilon )}_{*} + \eE[\alpha D].\pP(\alpha D \leq
		\varepsilon).
	\end{align*}
	By applying Markov inequality on (*) we get
	\begin{align}
		\eE[Z] & \leq \eE[\alpha D] + \eE[\alpha D].\pP(\alpha D \leq \varepsilon)  \nonumber  \\
		& = \eE[\alpha D](1 + \pP(\alpha D \leq \varepsilon) ) \nonumber \\ 
		& \leq 2\eE[\alpha D]. \label{inequality2_lemma1}
	\end{align}
	By Jensen's inequality, $(\eE[\alpha D])^2 \leq \eE[\alpha^2 D^2]$. From Proposition \ref{prop_1} we have 
	\begin{equation*}
		\eE[\alpha^2 D^2] \leq \alpha^2((n+\lambda-1)s(1-s)+C)\frac{\lambda^2}{n^2}.
	\end{equation*}
	Therefore, 
	\begin{equation*}
		\eE[\alpha D] \leq \alpha\frac{\lambda}{n}\sqrt{((n+\lambda-1)s(1-s)+C)}.
	\end{equation*}
	By substituting the above upper bound in \eqref{inequality2_lemma1} we get 
	\begin{align*}
		\eE[Z] \leq 2\alpha\frac{\lambda}{n}\sqrt{((n+\lambda-1)s(1-s)+C)}.
	\end{align*}
	By substituting the upper bound on $\eE[Z]$ in \eqref{mainbound_lemma1} we get
	\begin{align*} 
		 \pP( \hat{X}(s) \leq x, x < X(s) \leq x + \varepsilon)  
		 \leq 2\alpha\max_{x}( f_{X(s)}( x
		))\frac{\lambda}{n}\sqrt{((n+\lambda-1)s(1-s)+C)}.
	\end{align*}

\end{document}